\theoremstyle{plain}
\theoremstyle{definition}
\newtheorem{definition}{Definition}[section]
\newtheorem{remark}{Remark}[section]
\newtheorem{theorem}{Theorem}[section]
\newtheorem{lemma}{Lemma}[section]
\newtheorem{corollary}{Corollary}[section]
\numberwithin{equation}{section}
\begin{document}
\openup 0.8\jot
\title{\Large\bf Common coupled fixed point theorems
in $C^*$-algebra-valued metric spaces \thanks{This work is supported by the National Natural Science Foundation of China
(Grants Nos. 61701343 and 11701423)} }
\author{Cao Tianqing$^1$ \thanks{E-mail address:caotianqing@tiangong.edu.cn}, Xin Qiaoling$^2$}
\date{}
\maketitle\begin{center}
\begin{minipage}{16cm}
{\small \it
$1$
School of Mathematical Sciences, Tiangong University, Tianjin 300387, China

$2$
School of Mathematical Sciences, Tianjin Normal University, Tianjin 300387, China

}
\end{minipage}
\end{center}
\vspace{0.05cm}
\begin{center}
\begin{minipage}{16cm}
{\small {\bf Abstract}:
In this paper, we prove some common coupled fixed point theorems for mappings satisfying different contractive
conditions in the context of complete $C^*$-algebra-valued metric spaces.
Moreover,
the paper provides an application to prove the existence and uniqueness of a
solution for Fredholm nonlinear integral equations.}
\endabstract
\end{minipage}\vspace{0.10cm}
\begin{minipage}{16cm}
{\bf  Keywords}: coupled coincidence point, fixed point, $C^*$-algebra, positive element\\
Mathematics Subject Classification (2010): 47H10
\end{minipage}
\end{center}
\begin{center} \vspace{0.01cm}
\end{center}

\section{Introduction}
Fixed point theory, one of the active research areas in mathematics, focuses on maps and
abstract spaces. For example, fixed point theorems found
important applications to study the existence and uniqueness of solutions
for matrix equations, ordinary differential and integral equations, see \cite{Aydi,Bhaskar,Jleli,Nieto07,Ran} and references therein.

The notion of coupled fixed points was introduced by Guo and Lakshmikantham \cite{Guo}. Since then, the concept has been of interest to many researchers in fixed point
theory.
In 2006, Bhaskar and Lakshmikantham \cite{Bhaskar} introduced the concept of a mixed monotone property for the first time and investigated some coupled fixed point theorems for mappings. They also discussed the existence and uniqueness of solutions for the periodic boundary value problem as an application of their result.
Afterward, Sabetghadam et al. \cite{Sabetghadam}
introduced this concept in cone metric spaces and proved some fixed point theorems in cone metric spaces.
Later in \cite{Luong} Luong
and Thuan studied the existence and uniqueness of solutions for nonlinear integral equations as an application of coupled fixed points. Subsequently, Jleli and Samet \cite{Jleli} discussed the existence and uniqueness of a positive solution
for a class of singular nonlinear fractional differential equations.
As a result, many authors obtained many coupled fixed point and coupled coincidence
theorems in ordered metric spaces \cite{Aydi,Beg,Berinde,Samet}.

On the other hand, many authors studied the fixed and coupled fixed point theorems for different spaces, like in $b$-metric spaces \cite{Malhotra}, cone metric spaces \cite{Huang}, fuzzy metric spaces \cite{Mihet}, $G$-metric space \cite{Chen}, quasi-Banach spaces \cite{Hussain}, noncommutative Banach spaces \cite{Xin}, and so on.
In 2007, Huang and Zhang \cite{Huang} introduced cone metric spaces which generalized metric spaces, and obtained various fixed point theorems for contractive mappings.
Afterward, many authors investigated coupled fixed point their fixed point theorems in cone metric spaces \cite{Abbas,Karapinar10,Sabetghadam}. In 2014, Ma et al. \cite{Ma} initially introduced the concept
of $C^*$-algebra-valued metric spaces, and proved some fixed point theorems for
self-maps with contractive or expansive conditions on such spaces.

Motivated by the
works of Abbas et al. \cite{Abbas} and Ma et al. \cite{Ma}, in the present paper, we shall prove corresponding
common coupled fixed point theorems in $C^*$-algebra-valued metric spaces. More precisely, we
prove some common coupled fixed point theorems for the mapping under different contractive conditions.
We also illustrate how our results can be
applied to obtain the existence and uniqueness results
for Fredholm nonlinear integral equations.

First of all, we recall some basic definitions, notations and results of $C^*$-algebra that can be found in \cite{Murphy}.
Let $\mathcal A$ be a unital algebra. An involution on $\mathcal A$ is a conjugate-linear map $a\rightarrow a^*$ on $\mathcal A$ such that $a^{**}=a$ and $(ab)^*=b^*a^*$ for any $a,b\in \mathcal A$. The pair $(\mathcal A, *)$ is called a $*$-algebra. A $*$-algebra $\mathcal A$ together with a complete submultiplicative norm such that $\|a^*\|=\|a\|$ is said to be a Banach $*$-algebra. Furthermore, A $C^*$-algebra is a Banach $*$-algebra with $\|a^*a\|=\|a\|^2$, for all $a\in \mathcal A$. An element $a$ of a $C^*$-algebra ${\mathcal A}$ if positive if $a$ is hermitian and $\sigma(a)\subseteq [0,+\infty)$, where $\sigma(a)$ is the spectrum of $a$. We write $0_{\mathcal A}\preceq a$ to show that $a$ is positive, and denote by ${\mathcal A}_+$, ${\mathcal A}_h$ the set of positive elements, hermitian elements of ${\mathcal A}$, respectively, where $0_{\mathcal A}$ is the zero element in $\mathcal A$. There is a natural partial ordering on $\mathcal A_h$ given by
$a \preceq b$ if and only if $0_{\mathcal A}\preceq b-a$. From now on, ${\mathcal A}'$ will denote the set  the set $\{a\in \mathcal A\colon ab=ba, \forall b\in\mathcal A\}$.

Before giving our main results, we recall some basic concepts and results which will be needed in what follows. For more details, one can see \cite{Ma}.

\begin{definition}
Let $X$ be a nonempty set. Suppose that the mapping $d\colon X\times X\rightarrow \mathcal A$ is defined, with the following properties:
\begin{enumerate}
\item[(1)]
$0_{\mathcal A}\preceq d(x,y)$ for all $x, y$ in $X$;
\item[(2)]
$d(x,y)=0_{\mathcal A}$ if and only if $x=y$;
\item[(3)]
$d(x, y)=d(y, x)$ for all $x$ and $y$ in $X$;
\item[(4)]
$d(x, y)\preceq d(x, z)+d(z, y)$ for all $x$, $y$ and $z$ in $X$.
\end{enumerate}
Then $d$ is said to be a $C^*$-algebra-valued metric on $X$, and $(X,{\mathcal A},d)$ is said to be a $C^*$-algebra-valued metric space.
\end{definition}

\begin{definition}
Suppose that $(X,{\mathcal A},d)$ is a $C^*$-algebra-valued metric space. Let $\{x_n\}_{n=1}^\infty$ be a sequence in $X$ and $x\in X$. If
$d(x_n, x)\overset{\|\cdot\|_{\mathcal A}}{\longrightarrow} 0_{\mathcal A}$ $(n\rightarrow\infty)$, then it is said that $\{x_n\}$ converges to $x$, and we denote it by $\lim\limits_{n\rightarrow\infty}x_n=x$.
If for any $p\in\Bbb{N}$,
$d(x_{n+p}, x_n)\overset{\|\cdot\|_{\mathcal A}}{\longrightarrow} 0_{\mathcal A}$ $(n\rightarrow\infty)$, then $\{x_n\}$ is called a
Cauchy sequence in $X$.
\end{definition}

If every Cauchy sequence is convergent in $X$, then
$(X,{\mathcal A},d)$ is called a complete $C^*$-algebra-valued metric space.

It is obvious that any Banach space must be a complete $C^*$-algebra-valued metric space. Moreover,
$C^*$-algebra-valued metric spaces generalize normed linear spaces and metric spaces. There are some non-trivial examples
of complete $C^*$-algebra-valued metric spaces \cite{Ma}.

\section{Main results}
In this section we shall prove some common coupled fixed point theorems for different contractive mappings
in the setting of $C^*$-algebra-valued metric spaces.
Firstly we recall that the definitions which will be needed.

\begin{definition}\label{def1}\cite{Abbas}
Let $X$ be a non-empty set.
An element $(x,y)\in X\times X$ is said to be

(1) a couple fixed point of the mapping $F\colon X\times X\rightarrow X$ if $F(x,y)=x$ and $F(y,x)=y$.

(2) a coupled coincidence point of the mappings $F\colon X\times X\rightarrow X$ and $g\colon X\rightarrow X$ if
$F(x,y)=gx$ and $F(y,x)=gy$. In this case, $(gx,gy)$ is said to be coupled point of coincidence.

(3) a common coupled fixed point of the mappings $F\colon X\times X\rightarrow X$ and $g\colon X\rightarrow X$ if
$F(x,y)=gx=x$ and $F(y,x)=gy=y$.
\end{definition}

Note that Definition \ref{def1} (3) reduces to Definition \ref{def1} (1) if the mapping $g$ is the identity mapping.

\begin{definition}\cite{Abbas}
The mappings $F\colon X\times X\rightarrow X$ and $g\colon X\rightarrow X$ is said to be w-compatible if $g(F(x,y))=F(gx,gy)$ whenever $gx=F(x,y)$ and $gy=F(y,x)$.
\end{definition}

Now we give our main results.

\begin{theorem}\label{Th1}
Let $(X,\mathcal{A},d)$ be a complete $C^*$-algebra-valued metric space.
Suppose that the mappings $F\colon X\times X\rightarrow X$ and $g\colon X\rightarrow X$ satisfy the following condition
\begin{equation}\label{2-1}
d(F(x,y),F(u,v))\preceq a^*d(gx,gu)a+a^*d(gy,gv)a, \ \mbox{for\ any}\ x,y,u,v\in X,
\end{equation}
where $a\in \mathcal{A}$ with $\|a\|<\frac{1}{\sqrt{2}}$.
If $F(X\times X)\subseteq g(X)$ and $g(X)$ is complete in $X$, then $F$ and $g$ have a
coupled coincidence point. Moreover, if $F$ and $g$ are w-compatible, then they have unique common
coupled fixed point in $X$.
\end{theorem}

\begin{proof}
Take $x_0$, $y_0$ in $X$, and let $g(x_1)=F(x_0,y_0)$ and $g(y_1)=F(y_0,x_0)$. One can obtain two sequences $\{x_n\}$ and $\{y_n\}$ by continuing this process such that $g(x_{n+1})=F(x_n,y_n)$ and
$g(y_{n+1})=F(y_n,x_n)$. From (\ref{2-1}), we get
\begin{eqnarray}\label{2-1-1}
    \begin{array}{rcl}
d(gx_n,gx_{n+1})&=&d(F(x_{n-1},y_{n-1}),F(x_n,y_n))\\[5pt]
      &\preceq&a^*d(gx_{n-1},gx_n)a+a^*d(gy_{n-1},gy_n)a\\[5pt]
      &\preceq&a^*(d(gx_{n-1},gx_n)+d(gy_{n-1},gy_n))a.
 \end{array}
\end{eqnarray}

Similarly,
\begin{eqnarray}\label{2-1-2}
    \begin{array}{rcl}
d(gy_n,gy_{n+1})&=&d(F(y_{n-1},x_{n-1}),F(y_n,x_n))\\[5pt]
      &\preceq&a^*d(gy_{n-1},gy_n)a+a^*d(gx_{n-1},gx_n)a\\[5pt]
      &\preceq&a^*(d(gy_{n-1},gy_n)+d(gx_{n-1},gx_n))a.
 \end{array}
\end{eqnarray}

Let
\begin{eqnarray*}
\delta_n=d(gx_n,gx_{n+1})+d(gy_n,gy_{n+1}),
\end{eqnarray*}
and now from (\ref{2-1-1}) and (\ref{2-1-2}), we have
\begin{equation*}
\begin{array}{rcl}
\delta_n&=&d(gx_n,gx_{n+1})+d(gy_n,gy_{n+1})\\[5pt]
      &\preceq&a^*(d(gx_{n-1},gx_n)+d(gy_{n-1},gy_n))a
      +a^*(d(gy_{n-1},gy_n)+d(gx_{n-1},gx_n))a\\[5pt]
      &\preceq&(\sqrt{2}a)^*(d(gx_{n-1},gx_n)+d(gy_{n-1},gy_n))
      (\sqrt{2}a)\\[5pt]
      &\preceq&(\sqrt{2}a)^*\delta_{n-1}(\sqrt{2}a),
\end{array}
\end{equation*}
which, together with the property: if $b,c\in {\mathcal A}_h$, then $b\preceq c$ implies $a^*ba\preceq a^*ca$ (Theorem 2.2.5 in \cite{Murphy}), yields that for each $n\in\Bbb{N}$,
\begin{equation*}
0_{\mathcal A}\preceq\delta_n
      \preceq(\sqrt{2}a)^*\delta_{n-1}(\sqrt{2}a)
      \preceq \cdots
       \preceq[(\sqrt{2}a)^*]^{n}\delta_0(\sqrt{2}a)^{n}.
\end{equation*}

If $\delta_0=0_{\mathcal A}$, then from Definition 1.1 (2) we know that $F$ and $g$ have a coupled coincidence point $(x_0,y_0)$.
Now, letting $0_{\mathcal A}\preceq\delta_0$,
we can obtain for $n\in\Bbb{N}$ and any $p\in\Bbb{N}$,
\begin{eqnarray*}
\begin{array}{rcl}
d(gx_{n+p},gx_{n})&\preceq& d(gx_{n+p},gx_{n+p-1})+d(gx_{n+p-1},gx_{n+p-2})
              +\cdots+d(gx_{n+1},gx_{n}),\\[5pt]
d(gy_{n+p},gy_{n})&\preceq& d(gy_{n+p},gy_{n+p-1})+d(gy_{n+p-1},gy_{n+p-2})
              +\cdots+d(gy_{n+1},gy_{n}).
 \end{array}
 \end{eqnarray*}
Consequently,
\begin{eqnarray*}
    \begin{array}{rcl}
d(gx_{n+p},gx_{n})+d(gy_{n+p},gy_{n})
&\preceq&\delta_{n+p-1}+\delta_{n+p-2}+\cdots+\delta_{n}\\[5pt]
&\preceq&\sum\limits_{k=n}^{n+p-1}[(\sqrt{2}a)^*]^k\delta_0(\sqrt{2}a)^k,
 \end{array}
 \end{eqnarray*}
and then
\begin{eqnarray*}
\|d(gx_{n+p},gx_{n})+d(gy_{n+p},gy_{n})\|
\leq\sum\limits_{k=n}^{n+p-1}\|\sqrt{2}a\|^{2k}\delta_0
\leq\sum\limits_{k=n}^{\infty}\|\sqrt{2}a\|^{2k}\delta_0
=\frac{\|\sqrt{2}a\|^{2n}}{1-\|\sqrt{2}a\|^{2}}\delta_0.
 \end{eqnarray*}
Since $\|a\|<\frac{1}{\sqrt{2}}$, we have
\begin{eqnarray*}
\|d(gx_{n+p},gx_{n})+d(gy_{n+p},gy_{n})\|
\leq\frac{\|\sqrt{2}a\|^{2n}}{1-\|\sqrt{2}a\|^{2}}\delta_0\rightarrow 0,
 \end{eqnarray*}
which, together with $d(gx_{n+p},gx_{n})\preceq d(gx_{n+p},gx_{n})+d(gy_{n+p},gy_{n})$ and $d(gy_{n+p},gy_{n})\preceq d(gx_{n+p},gx_{n})$ $+d(gy_{n+p},gy_{n})$, implies that $\{gx_n\}$ and $\{gy_n\}$ are Cauchy sequences in $g(X)$. Since $g(X)$ is complete, there exist $x,y\in X$ such that $\lim\limits_{n\rightarrow\infty}gx_n=gx$ and $\lim\limits_{n\rightarrow\infty}gy_n=gy$. Now we prove that
$F(x,y)=gx$ and $F(y,x)=gy$. For that we have
\begin{eqnarray*}
    \begin{array}{rcl}
d(F(x,y),gx)
&\preceq&d(F(x,y),gx_{n+1})+d(gx_{n+1},gx)\\[5pt]
&\preceq&d(F(x,y),F(x_n,y_n))+d(gx_{n+1},gx)\\[5pt]
&\preceq&a^*d(gx_n,gx)a+a^*d(gy_n,gy)a+d(gx_{n+1},gx).
\end{array}
 \end{eqnarray*}
Taking the limit as $n\rightarrow\infty$ in the above relation, we get $d(F(x,y),gx)=0_{\mathcal A}$ and hence $F(x,y)=gx$.
Similarly, $F(y,x)=gy$. Therefore, $F$ and $g$ have a coupled coincidence point $(x,y)$.

Now if $F$ and $g$ have a coupled coincidence point $(x',y')$, then
\begin{eqnarray*}
d(gx,gx')=d(F(x,y),F(x',y'))\preceq a^*d(gx,gx')a+a^*d(gy,gy')a,\\[5pt]
d(gy,gy')=d(F(y,x),F(y',x'))\preceq a^*d(gy,gy')a+a^*d(gx,gx')a,
 \end{eqnarray*}
and hence
\begin{eqnarray*}
d(gx,gx')+d(gy,gy')\preceq (\sqrt{2}a)^*(d(gx,gx')+d(gy,gy'))(\sqrt{2}a),
 \end{eqnarray*}
which further induces that
\begin{eqnarray*}
\|d(gx,gx')+d(gy,gy')\|\leq \|\sqrt{2}a\|^2\|d(gx,gx')+d(gy,gy')\|.
 \end{eqnarray*}
Since $\|\sqrt{2}a\|< 1$, then $\|d(gx,gx')+d(gy,gy')\|=0$. Hence we get $gx=gx'$ and $gy=gy'$. Similarly, we can prove $gx=gy'$ and $gy=gx'$. Then $F$ and $g$ have a unique coupled point of coincidence $(gx,gx)$. Moreover, set $v=gx$, then $v=gx=F(x,x)$. Since $F$ and $g$ are w-compatible,
\begin{eqnarray*}
gv=g(gx)=g(F(x,x))=F(gx,gx)=F(v,v),
\end{eqnarray*}
which means that $F$ and $g$ have a coupled point of coincidence
$(gv,gv)$. By the uniqueness, we know $gv=gx$, which yields that
$v=gv=F(v,v)$. Therefore $F$ and $g$ have a unique common coupled fixed point $(v,v)$.
\end{proof}

\begin{remark}
Let $X=\Bbb{R}$ and $\mathcal{A}=M_2(\Bbb{C})$ and the map $d\colon X\times X\rightarrow \mathcal{A}$ is defined by
\begin{eqnarray*}
d(x,y)=\left[
          \begin{array}{cc}
            |x-y| & 0 \\
            0 & k|x-y| \\
          \end{array}
        \right],
         \end{eqnarray*}
where $k>0$ is a constant.
Then $(X,\mathcal{A},d)$ is a complete $C^*$-algebra-valued metric space.
Consider the mappings $F\colon X\times X\rightarrow X$ with $F(x,y)=\frac{x+y}{2}$ and $g\colon  X\rightarrow X$ with $g(x)=2x$.
Set $\lambda\in\Bbb{C}$ with $|\lambda|<\frac{1}{\sqrt{2}}$, and $a=\left[
          \begin{array}{cc}
            \lambda & 0 \\
            0 & \lambda \\
          \end{array}
        \right]$,
then $a\in \mathcal{A}$ and $\|a\|_\infty=|\lambda|$. Moreover, one can verify that $F$ satisfies the contractive condition
\begin{equation*}
d(F(x,y),F(u,v))\preceq a^*d(x,u)a+a^*d(y,v)a, \ \mbox{for\ any}\ x,y,u,v\in X.
\end{equation*}
In this case, $(0,0)$ is coupled coincidence point of $F$ and $g$. Moreover, $(0,0)$ is a unique common couple fixed point of $F$ and $g$ since they are w-compatible.
\end{remark}

\begin{corollary}\label{corollary1}
Let $(X,\mathcal{A},d)$ be a complete $C^*$-algebra-valued metric space.
Suppose that the mapping $F\colon X\times X\rightarrow X$ satisfies the following condition
\begin{equation}\label{2-1}
d(F(x,y),F(u,v))\preceq a^*d(x,u)a+a^*d(y,v)a, \ \mbox{for\ any}\ x,y,u,v\in X,
\end{equation}
where $a\in \mathcal{A}$ with $\|a\|<\frac{1}{\sqrt{2}}$.
Then $F$ has a unique
coupled fixed point.
\end{corollary}

Before going to another theorem, we recall the following lemma of \cite{Murphy}.
\begin{lemma}\label{L1}
Suppose that $\mathcal A$ is a unital $C^*$-algebra with a unit $1_{\mathcal A}$.
\begin{enumerate}
\item[(1)]
If $a\in {\mathcal A}_+$ with $\|a\|<\frac{1}{2}$, then $1_{\mathcal A}-a$ is invertible.
\item[(2)]
If $a,b\in{\mathcal A}_+$ and $ab=ba$, then $0_{\mathcal A}\preceq ab$.
\item[(3)]
If $a,b\in{\mathcal A}_h$ and $c\in {\mathcal A}_+'$, then
$a\preceq b$ deduces $ca\preceq cb$, where ${\mathcal A}_+'={\mathcal A}_+\cap {\mathcal A}'$.
\end{enumerate}
\end{lemma}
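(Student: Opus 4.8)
The plan is to handle the three parts separately, observing at the outset that part (3) will reduce to part (2). Throughout I would rely on the continuous functional calculus and the standard spectral facts for $C^*$-algebras available in \cite{Murphy}: in particular that $\sigma(a)\subseteq[0,\|a\|]$ whenever $a\in\mathcal A_+$, and that a hermitian element has spectral radius equal to its norm.

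For part (1), I would argue spectrally. Since $a\in\mathcal A_+$ is hermitian, its spectral radius coincides with $\|a\|$, and its spectrum is nonnegative, so $\sigma(a)\subseteq[0,\|a\|]\subseteq[0,\tfrac12)$. By the spectral mapping property, $\sigma(1_{\mathcal A}-a)=\{1-\lambda:\lambda\in\sigma(a)\}\subseteq(\tfrac12,1]$, which does not contain $0$; hence $1_{\mathcal A}-a$ is invertible. (Equivalently one may invoke the Neumann series $\sum_{n\ge0}a^n$, which converges absolutely because $\|a\|<\tfrac12<1$, and check directly that its sum inverts $1_{\mathcal A}-a$.)

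For part (2), the idea is to pass to square roots, and I expect this to be the main obstacle, since it is the only part that genuinely needs the functional calculus. Because $a,b\in\mathcal A_+$, the functional calculus supplies positive square roots $a^{1/2},b^{1/2}$, each a norm-limit of polynomials without constant term in $a$ and in $b$ respectively; the relation $ab=ba$ then forces $a^{1/2}$ and $b^{1/2}$ to commute. Setting $c:=a^{1/2}b^{1/2}$, commutativity gives $c^*=b^{1/2}a^{1/2}=a^{1/2}b^{1/2}=c$, so $c$ is hermitian, and $ab=a^{1/2}a^{1/2}b^{1/2}b^{1/2}=c^*c$. Since $c^*c\in\mathcal A_+$ always holds, we conclude $0_{\mathcal A}\preceq ab$. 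An alternative route, equally viable, is to restrict to the commutative $C^*$-subalgebra generated by $a$ and $b$ and apply the Gelfand transform, under which positive elements become nonnegative functions, so that $ab$ corresponds to a nonnegative function and is therefore positive.

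Finally, part (3) follows immediately from part (2). The hypothesis $a\preceq b$ means $b-a\in\mathcal A_+$, while $c\in\mathcal A_+'=\mathcal A_+\cap\mathcal A'$ is positive and commutes with every element of $\mathcal A$, in particular with $b-a$. Applying part (2) to the commuting positive pair $c$ and $b-a$ yields $0_{\mathcal A}\preceq c(b-a)=cb-ca$, that is, $ca\preceq cb$, as required.
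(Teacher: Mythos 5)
Your proposal is correct. Note, however, that the paper itself gives no proof of this lemma at all: it simply recalls the statement from the cited references (Ma et al.\ and Murphy's book), so there is no argument in the paper to compare against. What you have written is precisely the standard proof from those sources --- the spectral mapping/Neumann series argument for (1), the commuting-square-roots (or Gelfand transform) argument showing $ab=c^*c\succeq 0_{\mathcal A}$ for (2), and the reduction of (3) to (2) via $c(b-a)\succeq 0_{\mathcal A}$ --- and all three steps are sound; in particular you correctly justify that $a^{1/2}$ and $b^{1/2}$ commute by approximating the square roots with polynomials without constant term.
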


\begin{theorem}\label{Th2}
Let $(X,\mathcal{A},d)$ be a complete $C^*$-algebra-valued metric space.
Suppose that the mappings $F\colon X\times X\rightarrow X$ and
$g\colon  X\rightarrow X$ satisfy the following condition
\begin{equation}\label{2-2}
d(F(x,y),F(u,v))\preceq ad(F(x,y),gx)+bd(F(u,v),gu), \ \mbox{for\ any}\ x,y,u,v\in X,
\end{equation}
where $a,b\in \mathcal{A}_{+}'$ with $\|a\|+\|b\|<1$.
If $F(X\times X)\subseteq g(X)$ and $g(X)$ is complete in $X$, then $F$ and $g$ have a
coupled coincidence point. Moreover, if $F$ and $g$ are w-compatible, then they have unique common
coupled fixed point in $X$.
\end{theorem}

\begin{proof}
Similar to Theorem \ref{Th1}, construct two sequences $\{x_n\}$ and $\{y_n\}$ in $X$ such that $gx_{n+1}=F(x_n,y_n)$ and $gy_{n+1}=F(y_n,x_n)$. Then by applying (\ref{2-2}) we have
\begin{eqnarray*}
    \begin{array}{rcl}
(1_{\mathcal A}-b)d(gx_n,gx_{n+1})&\preceq&ad(gx_n,gx_{n-1}),\\[5pt]
(1_{\mathcal A}-b)d(gy_n,gy_{n+1})&\preceq&ad(gy_n,gy_{n-1}).
\end{array}
 \end{eqnarray*}
Since $a,b\in \mathcal{A}_{+}'$ with $\|a\|+\|b\|<1$, we have
$1_{\mathcal A}-b$ is invertible and $(1_{\mathcal A}-b)^{-1}a\in \mathcal{A}_{+}'$. Therefore
\begin{eqnarray*}
    \begin{array}{rcl}
d(gx_n,gx_{n+1})&\preceq&(1_{\mathcal A}-b)^{-1}ad(gx_n,gx_{n-1}),\\[5pt]
d(gy_n,gy_{n+1})&\preceq&(1_{\mathcal A}-b)^{-1}ad(gy_n,gy_{n-1}).
\end{array}
 \end{eqnarray*}
Then
\begin{eqnarray*}
    \begin{array}{rcl}
\|d(gx_n,gx_{n+1})\|&\preceq&\|(1_{\mathcal A}-b)^{-1}a\|\|d(gx_n,gx_{n-1})\|,\\[5pt]
\|d(gy_n,gy_{n+1})\|&\preceq&\|(1_{\mathcal A}-b)^{-1}a\|\|d(gy_n,gy_{n-1})\|.
\end{array}
 \end{eqnarray*}
It follows from the fact
\begin{eqnarray*}
\|(1_{\mathcal A}-b)^{-1}a\|\leq \|(1_{\mathcal A}-b)^{-1}\|\|a\|
\leq\sum\limits_{k=0}^\infty\|b\|^k\|a\|=\frac{\|a\|}{1-\|b\|}<1
 \end{eqnarray*}
that $\{gx_n\}$ and $\{gy_n\}$ are Cauchy sequences in $g(X)$ and therefore by the completeness of $g(X)$, there are $x,y\in X$ such that $\lim\limits_{n\rightarrow\infty}gx_n=gx$ and $\lim\limits_{n\rightarrow\infty}gy_n=gy$.
Since
\begin{eqnarray*}
    \begin{array}{rcl}
d(F(x,y),gx)&\preceq&d(gx_{n+1},F(x,y))+d(gx_{n+1},gx)\\[5pt]
&=&d(F(x_n,y_n),F(x,y))+d(gx_{n+1},gx)\\[5pt]
&\preceq&ad(F(x_n,y_n),gx_n)+bd(F(x,y),gx)+d(gx_{n+1},gx)\\[5pt]
&\preceq&ad(gx_{n+1},gx_n)+bd(F(x,y),gx)+d(gx_{n+1},gx),
\end{array}
 \end{eqnarray*}
which implies that
\begin{eqnarray*}
d(F(x,y),gx)\preceq (1-b)^{-1}ad(gx_{n+1},gx_n)+(1-b)^{-1}d(gx_{n+1},gx).
\end{eqnarray*}
Then $d(F(x,y),gx)=0_{\mathcal A}$ or equivalently $F(x,y)=gx$.
Similarly, one can obtain $F(y,x)=gy$.

Now if $(x',y')$ is another coupled coincidence point of $F$ and $g$, then according to (\ref{2-2}), we obtain
\begin{eqnarray*}
    \begin{array}{rcl}
d(gx',gx)&\preceq&d(F(x',y'),F(x,y))\\[5pt]
&\preceq&ad(F(x',y'),gx')+bd(F(x,y),gx)=0_{\mathcal A},
\end{array}
 \end{eqnarray*}
and
\begin{eqnarray*}
    \begin{array}{rcl}
d(gy',gy)&\preceq&d(F(y',x'),F(y,x))\\[5pt]
&\preceq&ad(F(y',x'),gy')+bd(F(y,x),gy)=0_{\mathcal A},
\end{array}
 \end{eqnarray*}
which implies that $gx'=gx$ and $gy'=gy$. Similarly, we have
$gx'=gy$ and $gy'=gx$. Hence $F$ and $g$ have a unique coupled point of coincidence $(gx,gx)$. Moreover, we can show that $F$ and $g$ have a unique common coupled fixed point.
\end{proof}

It is worth noting that when the contractive elements in Theorem \ref{Th2} are equal we have the following corollary.

\begin{corollary}
Let $(X,\mathcal{A},d)$ be a complete $C^*$-algebra-valued metric space.
Suppose that the mappings $F\colon X\times X\rightarrow X$ and $g\colon X\rightarrow X$ satisfy the following condition
\begin{equation*}
d(F(x,y),F(u,v))\preceq ad(F(x,y),gx)+ad(F(u,v),gu), \ \mbox{for\ any}\ x,y,u,v\in X,
\end{equation*}
where $a\in \mathcal{A}_{+}'$ with $\|a\|<\frac{1}{2}$.
If $F(X\times X)\subseteq g(X)$ and $g(X)$ is complete in $X$, then $F$ and $g$ have a
coupled coincidence point.
\end{corollary}

\begin{corollary}
Let $(X,\mathcal{A},d)$ be a complete $C^*$-algebra-valued metric space.
Suppose that the mapping $F\colon X\times X\rightarrow X$ satisfies the following condition
\begin{equation*}
d(F(x,y),F(u,v))\preceq ad(F(x,y),x)+bd(F(u,v),u), \ \mbox{for\ any}\ x,y,u,v\in X,
\end{equation*}
where $a,b\in \mathcal{A}_{+}'$ with $\|a\|+\|b\|<1$.
Then $F$ has a
unique coupled fixed point.
\end{corollary}

\begin{theorem}
Let $(X,\mathcal{A},d)$ be a complete $C^*$-algebra-valued metric space.
Suppose that the mappings $F\colon X\times X\rightarrow X$ and $g\colon X\rightarrow X$ satisfy the following condition
\begin{equation}\label{2-3}
d(F(x,y),F(u,v))\preceq ad(F(x,y),gu)+bd(F(u,v),gx), \ \mbox{for\ any}\ x,y,u,v\in X,
\end{equation}
where $a,b\in \mathcal{A}_{+}'$ with $\|a\|+\|b\|<1$.
If $F(X\times X)\subseteq g(X)$ and $g(X)$ is complete in $X$, then $F$ and $g$ have a
coupled coincidence point. Moreover, if $F$ and $g$ are w-compatible, then they have unique common
coupled fixed point in $X$.
\end{theorem}

\begin{proof}
Following similar process given in Theorem \ref{Th1}, we construct two sequences $\{gx_n\}$ and $\{gy_n\}$ in $X$ such that
$gx_{n+1}=F(x_n,y_n)$ and $gy_{n+1}=F(y_n,x_n)$.
Now, from (\ref{2-3}), we have
\begin{eqnarray*}
    \begin{array}{rcl}
d(gx_n,gx_{n+1})&=&d(F(x_{n-1},y_{n-1}),F(x_n,y_n))\\[5pt]
      &\preceq&ad(F(x_{n-1},y_{n-1}),gx_n)+bd(F(x_n,y_n),gx_{n-1})\\[5pt]
      &\preceq&bd(gx_{n+1},gx_{n-1})\\[5pt]
      &\preceq&bd(gx_{n+1},gx_{n})+bd(gx_{n},gx_{n-1}),
 \end{array}
\end{eqnarray*}
from which it follows
\begin{eqnarray}\label{2-3-1}
(1_{\mathcal A}-b)d(gx_n,gx_{n+1})\preceq bd(gx_{n},gx_{n-1}).
\end{eqnarray}
Because of the symmetry in (\ref{2-3}),
\begin{eqnarray*}
    \begin{array}{rcl}
d(gx_{n+1},gx_n)&=&d(F(x_n,y_n),F(x_{n-1},y_{n-1}))\\[5pt]
      &\preceq&ad(F(x_n,y_n),gx_{n-1})+bd(F(x_{n-1},y_{n-1}),gx_n)\\[5pt]
      &\preceq&ad(gx_{n+1},gx_{n-1})\\[5pt]
      &\preceq&ad(gx_{n+1},gx_{n})+ad(gx_{n},gx_{n-1}),
 \end{array}
\end{eqnarray*}
that is,
\begin{eqnarray}\label{2-3-2}
(1_{\mathcal A}-a)d(gx_n,gx_{n+1})\preceq ad(gx_{n},gx_{n-1}).
\end{eqnarray}
Now, from (\ref{2-3-1}) and (\ref{2-3-2}) we obtain
\begin{eqnarray*}
\left(1_{\mathcal A}-\frac{a+b}{2}\right)d(gx_n,gx_{n+1})\preceq \frac{a+b}{2}d(gx_{n},gx_{n-1}).
\end{eqnarray*}
Since $a,b\in \mathcal{A}_{+}'$ with $\|a+b\|\leq\|a\|+\|b\|<1$, then $\left(1_{\mathcal A}-\frac{a+b}{2}\right)^{-1}\in \mathcal{A}_{+}'$, which together with Lemma \ref{L1} (3) yields that
\begin{eqnarray*}
d(gx_n,gx_{n+1})\preceq \left(1_{\mathcal A}-\frac{a+b}{2}\right)^{-1}\frac{a+b}{2}d(gx_{n},gx_{n-1}).
\end{eqnarray*}
Let $t=\left(1_{\mathcal A}-\frac{a+b}{2}\right)^{-1} \frac{a+b}{2}$, then $\|t\|=\|\left(1_{\mathcal A}-\frac{a+b}{2}\right)^{-1} \frac{a+b}{2}\|<1$.
The same argument in Theorem \ref{Th2} tells that
$\{gx_n\}$ is a Cauchy sequence in $g(X)$.
Similarly, we can show $\{gy_n\}$ is also a Cauchy sequence in $g(X)$.
Therefore by the completeness of $g(X)$, there are $x,y\in X$ such that $\lim\limits_{n\rightarrow\infty}gx_n=gx$ and $\lim\limits_{n\rightarrow\infty}gy_n=gy$.
Now, we prove that $F(x,y)=gx$ and $F(y,x)=gy$. For that we have
\begin{eqnarray*}
    \begin{array}{rcl}
d(F(x,y),gx)&\preceq&d(gx_{n+1},F(x,y))+d(gx_{n+1},gx)\\[5pt]
&=&d(F(x_n,y_n),F(x,y))+d(gx_{n+1},gx)\\[5pt]
&\preceq&ad(F(x_n,y_n),gx)+bd(F(x,y),gx_n)+d(gx_{n+1},gx)\\[5pt]
&\preceq&ad(gx_{n+1},gx)+bd(F(x,y),gx_n)+d(gx_{n+1},gx),
\end{array}
 \end{eqnarray*}
and then
\begin{eqnarray*}
\|d(F(x,y),gx)\|\leq \|a\|\|d(gx_{n+1},gx)\|+\|b\|\|d(F(x,y),gx_n)\|+\|d(gx_{n+1},gx)\|.
 \end{eqnarray*}
By the continuity of the metric and the norm, we know
\begin{eqnarray*}
\|d(F(x,y),gx)\|\leq \|b\|\|d(F(x,y),gx)\|.
 \end{eqnarray*}
It follows from the fact $\|b\|<1$ that $\|d(F(x,y),gx)\|=0$. Thus $F(x,y)=gx$. Similarly, $F(y,x)=gy$. Hence $(x,y)$ is a coupled coincidence
point of $F$ and $g$.
The same reasoning that in Theorem \ref{Th2} tells us that $F$ and $g$ have unique common
coupled fixed point in $X$.
\end{proof}

\begin{corollary}
Let $(X,\mathcal{A},d)$ be a complete $C^*$-algebra-valued metric space.
Suppose that the mapping $F\colon X\times X\rightarrow X$ satisfies the following condition
\begin{equation*}
d(F(x,y),F(u,v))\preceq ad(F(x,y),gu)+ad(F(u,v),gx), \ \mbox{for\ any}\ x,y,u,v\in X,
\end{equation*}
where $a\in \mathcal{A}_{+}'$ with $\|a\|<\frac{1}{2}$.
If $F(X\times X)\subseteq g(X)$ and $g(X)$ is complete in $X$, then $F$ and $g$ have a
coupled coincidence point.
\end{corollary}
\begin{corollary}
Let $(X,\mathcal{A},d)$ be a complete $C^*$-algebra-valued metric space.
Suppose that the mapping $F\colon X\times X\rightarrow X$ satisfies the following condition
\begin{equation*}
d(F(x,y),F(u,v))\preceq ad(F(x,y),u)+bd(F(u,v),x), \ \mbox{for\ any}\ x,y,u,v\in X,
\end{equation*}
where $a,b\in \mathcal{A}_{+}'$ with $\|a\|+\|b\|<1$.
Then $F$ has a
unique coupled fixed point.
\end{corollary}

Coupled fixed point theorems in partially ordered metric spaces are widely investigated and have been found various applications in integral equations and periodic boundary value problem (see \cite{Berinde,Bhaskar,Hussain} and reference therein).
The coupled fixed point theorems proved here pave the way for an application on complete $C^*$-algebra-valued metric spaces to prove the existence and uniqueness of a solution for a Fredholm nonlinear integral equation.

Consider the integral equation
\begin{eqnarray}\label{3}
x(t)=\int_E \Big(K_1(t,s)+K_2(t,s)\Big)\Big(f(s,x(s))+g(s,x(s))\Big)ds+h(t), \ \ t\in E,
\end{eqnarray}
where $E$ is a Lebesgue measurable set and $m(E)<\infty$.

In what follows, we always let
$X=L^\infty(E)$ denote the class of essentially bounded measurable functions on $E$, where $E$ is a Lebesgue measurable set such that $m(E)<\infty$.

Now, we consider the functions $K_1$, $K_2$, $f$, $g$ fulfill the following assumptions:
\begin{enumerate}
\item[(i)]
$K_1\colon E\times E\rightarrow [0,+\infty)$, $K_2\colon E\times E\rightarrow (-\infty,0]$,
$f,g\colon E\times\Bbb{R}\rightarrow\Bbb{R}$ are integrable, and $h\in L^\infty(E)$;
\item[(ii)]
there exist $k\in (0,\frac{1}{2})$ such that
$$0\leq f(t,x)-f(t,y)\leq k(x-y)$$ and
$$-k(x-y)\leq g(t,x)-g(t,y)\leq 0$$
for $t\in E$ and $x,y\in\Bbb{R}$;
\item[(iii)]
$\sup\limits_{t\in E}\int_E (K_1(t,s)-K_2(t,s))ds\leq 1$.
\end{enumerate}

\begin{theorem}
Suppose that assumptions (i)-(iii) hold.
Then the integral equation (\ref{3}) has a unique solution in $L^\infty(E)$.
\end{theorem}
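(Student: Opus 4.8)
The plan is to recast the integral equation as a coupled fixed point problem and to invoke Theorem~\ref{Th1}. Take the unital $C^*$-algebra $\mathcal A=B(L^2(E))$, set $X=L^\infty(E)$, and for $f,g\in X$ define $d(f,g)=\pi_{|f-g|}$, the operator on $L^2(E)$ of multiplication by $|f-g|$; this is a positive element with $\|d(f,g)\|=\|f-g\|_\infty$, and $(X,\mathcal A,d)$ is a complete $C^*$-algebra-valued metric space. Define $F\colon X\times X\to X$ by
\[
F(x,y)(t)=\int_E\big(K_1(t,s)+K_2(t,s)\big)\big(f(s,x(s))+g(s,y(s))\big)\,ds+h(t),
\]
which maps into $L^\infty(E)$ by assumption (i). The point of this choice is that a coupled fixed point $(x,y)$ satisfying moreover $x=y$ is exactly a solution of (\ref{3}), while conversely any solution $x$ yields the coupled fixed point $(x,x)$. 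Since Theorem~\ref{Th1} produces a \emph{unique} coupled fixed point and forces its two coordinates to coincide, existence and uniqueness for (\ref{3}) follow at once.

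First I would verify the contractive condition (\ref{2-1}) with $a=\sqrt{k}\,1_{\mathcal A}$, so that $\|a\|=\sqrt{k}<\tfrac1{\sqrt2}$ thanks to $k<\tfrac12$ in assumption (ii). Subtracting and estimating pointwise,
\[
|F(x,y)(t)-F(u,v)(t)|\le\int_E|K_1+K_2|\,\big(|f(s,x)-f(s,u)|+|g(s,y)-g(s,v)|\big)\,ds .
\]
Here the sign hypotheses $K_1\ge 0\ge K_2$ give $|K_1+K_2|\le K_1-K_2$, and the Lipschitz envelopes in assumption (ii) give $|f(s,x)-f(s,u)|\le k|x(s)-u(s)|$ and $|g(s,y)-g(s,v)|\le k|y(s)-v(s)|$. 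Bounding $|x(s)-u(s)|\le\|x-u\|_\infty$ and $|y(s)-v(s)|\le\|y-v\|_\infty$ and then using assumption (iii), namely $\sup_t\int_E(K_1-K_2)\,ds\le1$, yields
\[
|F(x,y)(t)-F(u,v)(t)|\le k\big(\|x-u\|_\infty+\|y-v\|_\infty\big)\qquad(t\in E).
\]

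The main obstacle is passing from this sup-norm estimate to the \emph{order} inequality (\ref{2-1}) demanded by Theorem~\ref{Th1}: the displayed bound controls $\|d(F(x,y),F(u,v))\|$, whereas (\ref{2-1}) asks for $d(F(x,y),F(u,v))\preceq a^*d(x,u)a+a^*d(y,v)a$ in $\mathcal A_h$. With the multiplication metric $\pi_{|f-g|}$ this would require the \emph{pointwise} domination $|F(x,y)(t)-F(u,v)(t)|\le k|x(t)-u(t)|+k|y(t)-v(t)|$, which the averaging over $s$ does not supply. The clean remedy I would adopt is to use instead the scalar-valued metric $d(f,g)=\|f-g\|_\infty\,1_{\mathcal A}$, which is still a legitimate and complete $C^*$-algebra-valued metric: then both sides of (\ref{2-1}) are nonnegative multiples of $1_{\mathcal A}$, and the order relation reduces exactly to the established inequality $\|F(x,y)-F(u,v)\|_\infty\le k\|x-u\|_\infty+k\|y-v\|_\infty$. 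With (\ref{2-1}) thereby secured, Theorem~\ref{Th1} delivers a unique coupled fixed point $(x,y)$ with $x=y$, that is, a unique solution of (\ref{3}) in $L^\infty(E)$.
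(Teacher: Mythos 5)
Your proposal is correct, and it follows the paper's outline (recast (\ref{3}) as a coupled fixed point problem on $L^\infty(E)$, prove the estimate $\|F(x,y)-F(u,v)\|_\infty\le k\|x-u\|_\infty+k\|y-v\|_\infty$, invoke Theorem \ref{Th1} with $a=\sqrt{k}\,1_{\mathcal A}$), but it diverges from the paper at exactly the step that matters. A minor difference first: the paper splits the kernels in mixed-monotone fashion, $F(x,y)(t)=\int_E K_1(t,s)\big(f(s,x(s))+g(s,y(s))\big)ds+\int_E K_2(t,s)\big(f(s,y(s))+g(s,x(s))\big)ds+h(t)$, while you lump them as $(K_1+K_2)\big(f(s,x(s))+g(s,y(s))\big)$; both coincide with (\ref{3}) on the diagonal $x=y$ and both satisfy the same sup-norm estimate (yours via $|K_1+K_2|\le K_1-K_2$, the paper's by bounding the two integrals separately), so this is immaterial. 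The substantive difference: the paper keeps the multiplication-operator metric $d(f,g)=M_{|f-g|}$ valued in $B(L^2(E))$ and, having verified only the norm inequality $\|d(F(x,y),F(u,v))\|\le k\|x-u\|_\infty+k\|y-v\|_\infty$, immediately applies Theorem \ref{Th1} --- yet hypothesis (\ref{2-1}) is an \emph{order} inequality, which for multiplication operators with $a=\sqrt{k}\,1$ would require the pointwise domination $|F(x,y)(t)-F(u,v)(t)|\le k|x(t)-u(t)|+k|y(t)-v(t)|$ almost everywhere, and, as you observe, the integration over $s$ supplies no such pointwise control. Your switch to the scalar-valued metric $d(f,g)=\|f-g\|_\infty 1_{\mathcal A}$ makes both sides of (\ref{2-1}) nonnegative multiples of the unit, so the order inequality becomes literally the proven norm inequality and Theorem \ref{Th1} genuinely applies. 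In short, your route buys rigor: it closes a gap that the paper's own verification of (\ref{2-1}) leaves open, at no cost in generality, and your handling of the passage from unique coupled fixed point (with equal coordinates) to unique solution of (\ref{3}) is also stated more explicitly than in the paper.
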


\begin{proof}
Let $X=L^\infty(E)$ and $B(L^2(E))$ be the set of bounded linear operators on a Hilbert space $L^2(E)$.
We endow $X$ with the cone metric
 $d\colon X\times X\rightarrow B(L^2(E))$ defined by
$$d(f,g)=M_{|f-g|},$$
where $M_{|f-g|}$ is the multiplication operator on $L^2(E)$. It is clear that $(X,B(L^2(E)),d)$ is a complete $C^*$-algebra-valued metric space.

Define the self-mapping $F\colon X\times X\rightarrow X$ by
$$F(x,y)(t)=\int_E K_1(t,s)\Big(f(s,x(s))+g(s,y(s))\Big)ds
+K_2(t,s)\Big(f(s,y(s))+g(s,x(s))\Big)ds+h(t),$$
for all $t\in E$.

Now, we have
$$d(F(x,y),F(u,v))=M_{|F(x,y)-F(u,v)|}.$$
Let us first evaluate the following expression:
\begin{eqnarray*}
    \begin{array}{rcl}
&&|(F(x,y)-F(u,v))(t)|\\[6pt]
&=&|\int_E K_1(t,s)\Big(f(s,x(s))+g(s,y(s))\Big)ds
+\int_E K_2(t,s)\Big(f(s,y(s))+g(s,x(s))\Big)ds\\[6pt]
&&-
\int_E K_1(t,s)\Big(f(s,u(s))+g(s,v(s))\Big)ds
-\int_E K_2(t,s)\Big(f(s,v(s))+g(s,u(s))\Big)ds|\\[6pt]
&=&|\int_E K_1(t,s)\Big(f(s,x(s))-f(s,u(s))+g(s,y(s))-g(s,v(s))\Big)ds|\\[6pt]
&&+|\int_E K_2(t,s)\Big(f(s,y(s))-f(s,v(s))+g(s,x(s))-g(s,u(s))\Big)ds|\\[6pt]
&\leq&\int_E K_1(t,s)|f(s,x(s))-f(s,u(s))+g(s,y(s))-g(s,v(s))|ds\\[6pt]
&&-\int_E
K_2(t,s)|f(s,y(s))-f(s,v(s))+g(s,x(s))-g(s,u(s))|ds\\[6pt]
&\leq&\sup\limits_{s\in E}[k|x(s)-u(s)|+k|y(s)-v(s)|]\int_E (K_1(t,s)-K_2(t,s))ds\\[5pt]
&\leq&[k\|x-u\|_\infty+k\|y-v\|_\infty]\sup\limits_{t\in E}\int_E (K_1(t,s)-K_2(t,s))ds\\[6pt]
&\leq&k\|x-u\|_\infty+k\|y-v\|_\infty.
 \end{array}
\end{eqnarray*}
Therefore, we have
\begin{eqnarray*}
    \begin{array}{rcl}
\|d(F(x,y),F(u,v))\|&=&\|M_{|F(x,y)-F(u,v)|}\|\\[6pt]
&=&\sup\limits_{\|\varphi\|=1}(M_{|F(x,y)-F(u,v)|}\varphi,\varphi)\\[5pt]
&=&\sup\limits_{\|\varphi\|=1}\int_E |(F(x,y)-F(u,v))(t)|\varphi(t)\overline{\varphi(t)}dt\\[6pt]
&\leq&\sup\limits_{\|\varphi\|=1}
\int_E |\varphi(t)|^2dt (k\|x-u\|_\infty+k\|y-v\|_\infty)\\[6pt]
&\leq&k\|x-u\|_\infty+k\|y-v\|_\infty.
 \end{array}
\end{eqnarray*}
Set $a=\sqrt{k}1_{B(L^2(E))}$, then $a\in B(L^2(E))$ and $\|a\|=|\sqrt{k}|<\frac{1}{\sqrt{2}}$.
Hence, applying our Corollary \ref{corollary1}, we get the desired result.
\end{proof}

  \end{document}